	\setlist[enumerate]{label={\normalfont(\alph*)}} 
	\titleformat{\section}[block]{\Large\bfseries\filcenter}{\thesection}{1em}{}
\let\oldbibliography\thebibliography
\renewcommand{\thebibliography}[1]{%
  \oldbibliography{#1}%
  \setlength{\itemsep}{-.5pt}%
}
\renewcommand*\thesection{\arabic{section}}
\theoremstyle{plain}
\newtheorem{thm}{Theorem}
\newtheorem*{thm*}{Theorem}
\newtheorem{lemma}[thm]{Lemma}
\newtheorem*{lemma*}{Lemma}
\newtheorem*{cor*}{Corollary}
\theoremstyle{definition}
\newcommand{\thistheoremname}{}
\newtheorem{genericthm}[equation]{\thistheoremname}
\newcommand{\thistheoremnames}{}
\newtheorem*{genericthms}{\thistheoremnames}
\newenvironment{para*}[1]
  {\renewcommand{\thistheoremnames}{#1}%
   \begin{genericthms}}
  {\end{genericthms}}
\let\expandafter\oldproof\csname\string\proof\endcsname
\let\oldendproof\endproof
\renewenvironment{proof}[1][\proofname]{%
  \oldproof[\upshape \bfseries #1:]%
}{\oldendproof}
\def\@makechapterhead#1{%
  \vspace*{50\p@}%
  {\parindent \z@ \raggedright \normalfont
    \interlinepenalty\@M
    \Huge\bfseries  \thechapter.\quad #1\par\nobreak
    \vskip 40\p@
  }}
\def \a{\alpha}
\def \R {\mathbb{R}}
\def \C{\mathbb{C}}
\def \D{\textup{D}}
\def \e{\varepsilon}
\def \exc{\backslash}
\def \p{\partial}
\def \mc{\mathcal}
\def \tp{\textup}
\def \mb{\mathbb}
\newcommand{\imag}{\mathrm{i}}
\newcommand{\expo}{\mathrm{e}}
\DeclareFontFamily{OMX}{MnSymbolE}{}
\DeclareSymbolFont{MnLargeSymbols}{OMX}{MnSymbolE}{m}{n}
\DeclareFontShape{OMX}{MnSymbolE}{m}{n}{
    <-6>  MnSymbolE5
   <6-7>  MnSymbolE6
   <7-8>  MnSymbolE7
   <8-9>  MnSymbolE8
   <9-10> MnSymbolE9
  <10-12> MnSymbolE10
  <12->   MnSymbolE12
}{}
\DeclareFontShape{OMX}{MnSymbolE}{b}{n}{
    <-6>  MnSymbolE-Bold5
   <6-7>  MnSymbolE-Bold6
   <7-8>  MnSymbolE-Bold7
   <8-9>  MnSymbolE-Bold8
   <9-10> MnSymbolE-Bold9
  <10-12> MnSymbolE-Bold10
  <12->   MnSymbolE-Bold12
}{}
\let\llangle\@undefined
\let\rrangle\@undefined
\DeclareMathDelimiter{\llangle}{\mathopen}%
                     {MnLargeSymbols}{'164}{MnLargeSymbols}{'164}
\DeclareMathDelimiter{\rrangle}{\mathclose}%
                     {MnLargeSymbols}{'171}{MnLargeSymbols}{'171}
\begin{document}

 \title{\LARGE \textbf{On the necessity of the constant rank condition\\ for $L^p$ estimates}}

\author[1]{{\Large Andr\'e Guerra\vspace{0.4cm}}}
\author[2]{{\Large  Bogdan Rai\cb{t}\u{a}}}

\affil[1]{\small University of Oxford, Andrew Wiles Building, Woodstock Rd, Oxford OX2 6GG, United Kingdom \protect \\
{\tt{guerra@maths.ox.ac.uk}} \vspace{1em} \ }

\affil[2]{\small 
Max Planck Institute for Mathematics in the Sciences, Inselstraße 22, 04103 Leipzig, Germany\protect\\  {\tt{raita@mis.mpg.de}} \ }

\date{}

\maketitle

\begin{abstract}
We consider a generalization of the elliptic $L^p$-estimate suited for linear operators with non-trivial kernels. A classical result of Schulenberger and Wilcox (Ann. Mat. Pura Appl. (4) 88: 229–305, 1971) shows that if the operator has constant rank then the estimate holds. We prove necessity of the constant rank condition for such an estimate.
\end{abstract}

\vspace{0.2cm}

\unmarkedfntext{
\hspace{-0.85cm} 
\emph{2010 Mathematics Subject Classification:} 26D10 (42B20)

\noindent \emph{Keywords:} Linear partial differential operators, Constant rank, $L^p$ estimates, Compensated Compactness.

\noindent  \emph{Acknowledgments:} The authors thank Jan Kristensen for encouragement and helpful comments. A.G. was supported by  [EP/L015811/1]. 
}

Consider a  linear constant-coefficient homogeneous differential operator $\mc A$, 
\begin{equation}
\label{eq:defA}
\mc A \varphi  = \sum_{|\alpha|=k} A_\a \p^\a \varphi, \qquad \varphi \colon \Omega\subseteq\R^n \to \mb V;
\end{equation}
here $\mb V, \mb W$ are finite-dimensional inner product spaces and $A_\a\in \tp{Lin}(\mb V,\mb W)$.
Given $1<p<\infty$, there is a constant $C_p$ such that
\begin{equation}
\Vert \D^k \varphi \Vert_{L^p(\R^n)} \leq C_p \Vert \mc A \varphi \Vert_{L^p (\R^n)} \qquad \tp{for all } \varphi \in C^\infty_c(\R^n, \mb V)
\label{eq:ellestimate}
\end{equation}
if and only if $\mc A$ is elliptic; this a classical result that goes back to the work of \textsc{Calderón}--\textsc{Zygmund} \cite{Calderon1952}. We recall that $\mc A$ is \emph{(overdetermined) elliptic} if the symbol
$\mb S^{n-1} \ni \xi \mapsto \mc A (\xi)\equiv \sum_{|\a|=k} (\imag\xi)^\a A_\a$
is injective. We also remark that the estimate \eqref{eq:ellestimate} only holds in trivial cases when $p=1$ \cite{Ornstein1962, Kirchheim2016} and $p=\infty$ \cite{Boman1972,Mityagin1958}.
We refer the reader to \cite{Faraco2020} for a short proof of the $p=1$ case in two dimensions.

Denote by $\mc F\equiv \widehat\cdot$ the Fourier transform and define for $\varphi \in C^\infty_c(\R^n,\mb V)$ the operator
$$\widehat{P_\mc A \varphi}(\xi) \equiv \tp{Proj}_{\ker \mc A(\xi)} \widehat\varphi(\xi).$$
All projections in this note are taken to be orthogonal. Note that $P_\mc A \varphi\in L^2(\R^n,\mb V)$ whenever $\varphi \in L^2(\R^n,\mb V)$.
The operator $\mc A$ is elliptic if and only if $P_\mc A=0$, a fact which explains the necessity of ellipticity for the estimate \eqref{eq:ellestimate}, c.f.\ \cite{VanSchaftingen2011}. 
Thus, it is natural to wonder whether \eqref{eq:ellestimate} holds if we test it only in the orthogonal complement of $\ker \mc A\subset L^2(\R^n,\mb V)$, and this is precisely what we investigate here.

The operator $\mc A$ has \emph{constant rank} if $\tp{rank}(\mc A(\xi))$ is constant for all $\xi \in \mb S^{n-1}$. In this note, we prove that this condition is equivalent to a more general version of \eqref{eq:ellestimate}:

\begin{thm*}
Given $1<p<\infty$, an operator $\mc A$ as in \eqref{eq:defA} has constant rank if and only if
\begin{equation}\Vert \D^k(\varphi - P_{\mc A} \varphi) \Vert_{L^p(\R^n)}
\leq C_p \Vert \mc A \varphi \Vert_{L^p(\R^n)} \qquad
\tp{ for all } \varphi \in C^\infty_c(\R^n, \mb V).
\label{eq:CRestimate}
\end{equation}
\end{thm*}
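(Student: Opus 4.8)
The forward implication (constant rank $\Rightarrow$ \eqref{eq:CRestimate}) is the theorem of Schulenberger and Wilcox recalled above; a short direct proof runs as follows. Under the hypothesis $\tp{rank}\,\mc A(\xi)$ is, by homogeneity, constant on $\R^n\exc\{0\}$, and a standard fact --- e.g.\ via the Riesz projection $\tp{Proj}_{\ker\mc A(\xi)}=\tfrac1{2\pi\imag}\oint_\gamma(z-\mc A(\xi)^*\mc A(\xi))^{-1}\,\d z$, $\gamma$ a small circle around $0$ --- shows that $\xi\mapsto\tp{Proj}_{\ker\mc A(\xi)}$, and hence the Moore--Penrose inverse $\mc A(\xi)^\dagger$, is $C^\infty$ and homogeneous of degree $0$, respectively $-k$, on $\R^n\exc\{0\}$. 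Writing $\ell(\xi)$ for the ($k$-homogeneous) symbol of $\D^k$, which satisfies $|\ell(\xi)v|\simeq|\xi|^k|v|$, the symbol $m(\xi):=\ell(\xi)\mc A(\xi)^\dagger$ is then $C^\infty$ and $0$-homogeneous, so it obeys Mikhlin's condition $|\p^\beta m(\xi)|\lesssim|\xi|^{-|\beta|}$; hence the (matrix-valued) Fourier multiplier $T_m\varphi:=\mc F^{-1}(m\,\widehat\varphi)$ is bounded on $L^p$, $1<p<\infty$. Since $m(\xi)\mc A(\xi)=\ell(\xi)\mc A(\xi)^\dagger\mc A(\xi)=\ell(\xi)(\tp{Id}-\tp{Proj}_{\ker\mc A(\xi)})$, one has $T_m(\mc A\varphi)=\D^k(\varphi-P_\mc A\varphi)$, so \eqref{eq:CRestimate} holds with $C_p=\|T_m\|_{L^p\to L^p}$.

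For the converse --- the new content --- suppose $\mc A$ does not have constant rank and set $R:=\max_{\mb S^{n-1}}\tp{rank}\,\mc A$. Since $\xi\mapsto\tp{rank}\,\mc A(\xi)$ is integer-valued and lower semicontinuous, $U:=\{\xi\in\mb S^{n-1}:\tp{rank}\,\mc A(\xi)=R\}$ is open and, by assumption, a proper subset of $\mb S^{n-1}$; by connectedness, $\p U\neq\vazio$. I would fix $\xi_0\in\p U$, so that $r_0:=\tp{rank}\,\mc A(\xi_0)<R$ and there is a sequence $U\ni\xi_j\to\xi_0$. Passing to a subsequence so that $\ker\mc A(\xi_j)$ converges, in the Grassmannian of $(\dim\mb V-R)$-planes, to a subspace $Q$, continuity of the symbol forces $Q\subseteq\ker\mc A(\xi_0)$; and since $\dim Q=\dim\mb V-R<\dim\mb V-r_0=\dim\ker\mc A(\xi_0)$, the inclusion is strict, so there is a unit vector $v_0\in\ker\mc A(\xi_0)\cap Q^\perp$. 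It satisfies
$$\tp{Proj}_{(\ker\mc A(\xi_j))^\perp}v_0\longrightarrow v_0,\qquad a_j:=|\mc A(\xi_j)v_0|\longrightarrow|\mc A(\xi_0)v_0|=0,$$
with $a_j>0$ for $j$ large; moreover $\sigma_j:=\sigma^+_{\min}(\mc A(\xi_j))\to0$, as $R-r_0\geq1$ positive singular values of $\mc A(\xi_j)$ collapse.

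To contradict \eqref{eq:CRestimate} I would test it on wave packets concentrated in frequency near $\xi_j$ in the direction $v_0$. Fix $\chi\in C^\infty_c(B_1)$ with $\chi\geq0$, $\chi\not\equiv0$, and set $\widehat{\varphi_j}(\xi):=\chi((\xi-\xi_j)/\delta_j)\,v_0$ for a suitable $\delta_j\to0$ (a routine truncation reduces to $\varphi\in C^\infty_c$, or one notes that \eqref{eq:CRestimate} extends by density to $\mc S(\R^n,\mb V)$). Since $\xi_j$ lies in the open set $U$, the map $\xi\mapsto\tp{Proj}_{(\ker\mc A(\xi))^\perp}$ is $C^\infty$ near $\xi_j$, with derivatives of size controlled by powers of $\sigma_j^{-1}$. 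Choosing $\delta_j$ to decay fast enough --- faster than $a_j$ and than a fixed power of $\sigma_j$, reflecting this degeneration --- a frozen-coefficient estimate (replace $\mc A(\cdot)$ and $\ell(\cdot)\tp{Proj}_{(\ker\mc A(\cdot))^\perp}$ by their values at $\xi_j$ on $\supp\widehat{\varphi_j}$; the leading term then contributes a fixed multiple of the frozen amplitude times $\delta_j^{n/p'}$, and the remainders are of lower order by $\|(\widehat g)^\vee\|_{L^p}\lesssim\delta_j^{n/p'}\,\|\widehat g(\xi_j+\delta_j\,\cdot\,)\|_{C^N}$ for $\widehat g$ supported in $B(\xi_j,\delta_j)$) gives, for $j$ large and $1/p+1/p'=1$,
$$\|\mc A\varphi_j\|_{L^p(\R^n)}\lesssim a_j\,\delta_j^{n/p'},\qquad \|\D^k(\varphi_j-P_\mc A\varphi_j)\|_{L^p(\R^n)}\gtrsim\delta_j^{n/p'},$$
where I used $|\mc A(\xi_j)v_0|=a_j$, $|\ell(\xi_j)v_0|\simeq|\xi_j|^k=1$, and $|\tp{Proj}_{(\ker\mc A(\xi_j))^\perp}v_0|\to1$. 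Thus the ratio of the two sides of \eqref{eq:CRestimate} is $\gtrsim a_j^{-1}\to\infty$, a contradiction; hence $\mc A$ has constant rank.

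The main obstacle is this last quantitative step. One must take $\delta_j\to0$ fast enough --- relative both to $a_j=|\mc A(\xi_j)v_0|$ and to the rate at which $\tp{Proj}_{(\ker\mc A(\xi))^\perp}$ and its derivatives blow up on the \emph{shrinking} constant-rank neighbourhoods of the $\xi_j$ --- that the localized operators $\mc A$ and $\D^k\circ(\tp{Id}-P_\mc A)$ act, to leading order, through the \emph{constant} multipliers $\mc A(\xi_j)$ and $\ell(\xi_j)\tp{Proj}_{(\ker\mc A(\xi_j))^\perp}$; keeping the error terms uniformly controlled in $j$ is exactly what dictates passing to a subsequence along which $\ker\mc A(\xi_j)$ converges and choosing $v_0\perp Q$, the latter being what prevents $\|\D^k(\varphi_j-P_\mc A\varphi_j)\|_{L^p}$ from degenerating along with $\|\mc A\varphi_j\|_{L^p}$.
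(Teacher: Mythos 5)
Your forward direction is essentially the paper's: the multiplier $\xi\mapsto \ell(\xi)\mc A(\xi)^\dagger$ is $0$-homogeneous and smooth off the origin under constant rank (you get smoothness from the Riesz projection, the paper from Decell's formula), and H\"ormander--Mihlin finishes. Your converse, however, is genuinely different from the paper's, and the comparison is instructive. The paper never confronts the regularity of $\xi\mapsto \tp{Proj}_{\ker\mc A(\xi)}$ at all: it tests \eqref{eq:CRestimate} on $\varphi=\mc A^*\big(g(\e x)\expo^{\imag x\cdot\xi}w\big)$, for which $\widehat\varphi(\zeta)=\mc A^*(\zeta)\widehat\eta(\zeta)\in\tp{im}\,\mc A^*(\zeta)=(\ker\mc A(\zeta))^\perp$ at \emph{every} frequency $\zeta$, so $P_\mc A\varphi\equiv 0$ exactly. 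Letting the envelope spread ($\e\to0$) then yields the clean pointwise symbol inequality $|\xi|^k|\mc A^*(\xi)w|\le C|\mc A(\xi)\mc A^*(\xi)w|$ for all $\xi\neq0$, and a separate linear-algebra lemma (if $\tp{rank}\,\mc A(\xi_1)>\tp{rank}\,\mc A(\xi_2)$ then $|\mc A^\dagger(\xi_1)|\ge|\mc A(\xi_1)-\mc A(\xi_2)|^{-1}$, via a unit vector in $\ker\mc A(\xi_2)\cap(\ker\mc A(\xi_1))^\perp$ --- the same vector your $v_0\in\ker\mc A(\xi_0)\cap Q^\perp$ plays the role of) converts uniform invertibility of $\mc A(\xi)|_{\tp{im}\,\mc A^*(\xi)}$ into constant rank. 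You instead attack the degenerate frequency directly with wave packets $\widehat{\varphi_j}=\chi((\cdot-\xi_j)/\delta_j)v_0$, which forces you to linearize $P_\mc A$ on $\supp\widehat{\varphi_j}$ and hence to control the derivatives of the projection on neighbourhoods where the spectral gap $\sigma_j^2$ degenerates. What your route buys is a very explicit picture of \emph{where} the estimate fails (it localizes the failure at a boundary point of the maximal-rank set); what the paper's route buys is the complete elimination of the delicate frozen-coefficient step, plus a quantitative symbol inequality \eqref{eq:symbolbound} of independent interest.

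Two caveats on your write-up, neither fatal. First, the frozen-coefficient step is only sketched; it does work, because for $\delta_j\lesssim\sigma_j$ the $R$-th singular value stays $\ge\sigma_j/2$ on $B(\xi_j,\delta_j)$, so the rank is $R$ there and standard perturbation theory bounds each derivative of the projection by a power of $\sigma_j^{-2}$, whence $\delta_j\ll\min(a_j,\sigma_j^{2N})$ suffices --- but this is precisely the work the paper's choice of test function makes unnecessary, and it should be carried out rather than asserted. Second, the extension of \eqref{eq:CRestimate} from $C^\infty_c$ to Schwartz functions is not purely formal since $P_\mc A$ is a priori only $L^2$-bounded; one should note that \eqref{eq:CRestimate} applied to differences makes $\D^k(\varphi_m-P_\mc A\varphi_m)$ Cauchy in $L^p$ along an approximating sequence, and the limit is identified with $\D^k(\varphi-P_\mc A\varphi)$ in $\mathcal S'$ via the $L^2$-continuity of $P_\mc A$.
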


The sufficiency of the constant rank condition for the estimate \eqref{eq:CRestimate} is classical and seems to go back to the work of \textsc{Schulenberger}--\textsc{Wilcox} \cite{Schulenberger1971}, at least for the $p=2$ case, see also \cite{Kato1975,Murat1981}. It seems, however, that the necessity of this condition has remained unnoticed. 
 
The inequality \eqref{eq:CRestimate} is often used in the $L^p$-theory of Compensated Compactness \cite{Fonseca1999, Murat1981,Tartar1979} and, more recently, it has been used in \cite{Guerra2019} through constructions with potentials \cite{Raita2018}.
Moreover, when $p=1$ or $p= \infty$, \eqref{eq:CRestimate} never holds except in trivial cases: this is recovered from the classical results mentioned above by considering $\varphi = \mc A^*\psi$ for a test function $\psi$. On the other hand, strong type estimates on lower order derivatives in the spirit of \eqref{eq:CRestimate} can be proved, see \cite{Raita2018a}, building on \cite{Raita2018,VanSchaftingen2011}. Finally, we remark that the constant rank condition is not necessary for estimates on lower order derivatives, as can be seen from the simple example $\|u\|_{L^\infty}\leq \|\partial_1\partial_2 u\|_{L^1}$
for $u\in C_c^\infty(\R^2)$.

For $A\in \tp{Lin}(\mb V, \mb W)$, the \emph{Moore--Penrose generalized inverse} of $A$, sometimes called the \emph{pseudoinverse}, is the unique $A^\dagger\in\tp{Lin}(\mb W, \mb V)$ such that
$A A^\dagger = \tp{Proj}_{\tp{im}\, A}$ and 
$A^\dagger A =  \tp{Proj}_{\tp{im}\, A^*}.$ 
Equivalently, we may define $$A^\dagger \equiv \left( A|_{(\ker A)^\bot} \right)^{-1} \tp{Proj}_{\tp{im\,}A}.$$
We refer the reader to \cite{Campbell2009} for these and numerous other properties of generalized inverses.

The proof of the theorem is based on two observations, that we record as separate lemmas.

\begin{lemma}\label{lemma:smoothness}
Let $\Omega\subset \R^n$ be an open set. A smooth map $A\colon\Omega\to \tp{Lin}(\mb V, \mb W)$,  $A^\dagger\colon \Omega\to \tp{Lin}(\mb W, \mb V)$ is locally bounded if and only if $\tp{rank}\,A$ is constant in $\Omega$. In that case, $\mc A^\dagger$ is also smooth.
\end{lemma}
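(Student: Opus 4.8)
The plan is to rely on two elementary facts. First, $\tp{rank}\,A$ is lower semicontinuous, since every $k\times k$ minor of $A$ is a polynomial in its entries, so that $\{x:\tp{rank}\,A(x)\ge k\}$ is open. Second, for any $B\in\tp{Lin}(\mb V,\mb W)$ one has $\|B^\dagger\|_{\tp{op}}=(\sigma_{\min}^+(B))^{-1}$, the reciprocal of the smallest \emph{non-zero} singular value of $B$ (immediate from the singular value decomposition), and the singular values $\sigma_1(x)\ge\sigma_2(x)\ge\cdots$ of $A(x)$, being the square roots of the ordered eigenvalues of $A(x)^*A(x)$, vary continuously with $x$ because $A$ is smooth. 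After noting that all of this is local, we assume $\Omega$ is connected (otherwise ``constant rank'' should read ``locally constant rank'' and the argument is unchanged).

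For the necessity of constant rank I would argue by contradiction: assume $A^\dagger$ is locally bounded but $\tp{rank}\,A$ is non-constant on the connected set $\Omega$. Being integer-valued and non-constant, $\tp{rank}\,A$ is discontinuous at some $x_0\in\Omega$, and lower semicontinuity forces $\limsup_{x\to x_0}\tp{rank}\,A(x)>\tp{rank}\,A(x_0)=:r_0$; hence there is a sequence $x_j\to x_0$ with $\tp{rank}\,A(x_j)\ge r_0+1$. Since $A(x_0)$ has exactly $r_0$ non-zero singular values we have $\sigma_{r_0+1}(x_0)=0$, so $\sigma_{r_0+1}(x_j)\to 0$; but $\sigma_{r_0+1}(x_j)>0$ is a non-zero singular value of $A(x_j)$, whence $\|A(x_j)^\dagger\|_{\tp{op}}\ge(\sigma_{r_0+1}(x_j))^{-1}\to\infty$, contradicting local boundedness of $A^\dagger$ near $x_0$.

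For the converse, suppose $\tp{rank}\,A\equiv r$ and fix $x_0\in\Omega$. Continuity of the singular values gives a ball $U\ni x_0$ and constants $0<c\le C$ with $\sigma_i(x)\in[c,C]$ for $i\le r$ and $\sigma_i(x)=0$ for $i>r$ on $U$, so that the spectrum of $A(x)^*A(x)$ is contained in $\{0\}\cup[c^2,C^2]$ for $x\in U$. Fixing a positively oriented contour $\gamma\subset\{\tp{Re}\,z>0\}$ that encircles $[c^2,C^2]$ but not $0$, the holomorphic functional calculus yields
\begin{equation*}
\bigl(A(x)^*A(x)\bigr)^\dagger=\frac{1}{2\pi\imag}\oint_\gamma\frac{1}{z}\,\bigl(z\,\tp{Id}_{\mb V}-A(x)^*A(x)\bigr)^{-1}\d z ,
\end{equation*}
which one checks by diagonalising $A(x)^*A(x)$ and computing residues — a non-zero eigenvalue $\lambda$ contributes $\lambda^{-1}$, while the pole at $z=0$ lies outside $\gamma$. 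As the integrand is smooth in $x\in U$ and $\gamma$ is fixed, $(A^*A)^\dagger$ is smooth on $U$, and therefore so is $A^\dagger=(A^*A)^\dagger A^*$ (a standard identity, see \cite{Campbell2009}); in particular $A^\dagger$ is locally bounded.

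The main obstacle is this last, ``classical'' direction: $A^\dagger$ is not a rational function of the entries of $A$, so its regularity cannot be read off directly and genuinely requires the spectral gap of $A(x)^*A(x)$ away from $0$ — precisely what constant rank provides, and what fails otherwise, in agreement with the necessity half. The necessity direction itself only uses soft (semi)continuity properties and is comparatively routine.
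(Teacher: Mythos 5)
Your proof is correct, but both halves take a genuinely different route from the paper's. For necessity, the paper avoids singular values altogether: it proves the pointwise inequality
\begin{equation*}
\tp{rank}\,A(\xi_1)>\tp{rank}\,A(\xi_2)\ \implies\ |A^\dagger(\xi_1)|\geq |A(\xi_1)-A(\xi_2)|^{-1}
\end{equation*}
by choosing a unit vector $v\in\ker A(\xi_2)\cap(\ker A(\xi_1))^\bot$ and observing that $A^\dagger(\xi_1)(A(\xi_1)-A(\xi_2))v=v$; applying this along a sequence $\xi_j\to\xi_0$ on which the rank differs from $\tp{rank}\,A(\xi_0)$, continuity of $A$ forces the blow-up. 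This is a two-line algebraic estimate that needs no connectedness reduction and no eigenvalue continuity, whereas your route through lower semicontinuity of the rank, continuity of the ordered singular values, and $\|B^\dagger\|_{\tp{op}}=(\sigma^+_{\min}(B))^{-1}$ is also sound but imports the (standard) continuity of the spectrum of $A^*A$; the caveat you rightly flag --- that for disconnected $\Omega$ ``constant'' should read ``locally constant'' --- applies equally to the paper's statement. For sufficiency, the paper invokes Decell's formula, $A^\dagger=-a_r^{-1}A^*\sum_{i=1}^{r}a_{i-1}(AA^*)^{r-i}$ with $a_i$ the coefficients of the characteristic polynomial of $AA^*$ and $a_r$ non-vanishing under the constant rank hypothesis, so that $A^\dagger$ is in fact a \emph{rational} function of the entries of $A$ on the constant-rank stratum (this slightly undercuts your closing remark: the obstruction to rationality is only across strata). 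Your Riesz contour integral for $(A^*A)^\dagger$ combined with $A^\dagger=(A^*A)^\dagger A^*$ exploits exactly the same spectral gap and is a perfectly good self-contained substitute; Decell's formula buys an explicit closed form and shorter verification, while your functional calculus argument avoids quoting the algebraic identity and makes the role of the gap transparent.
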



\begin{proof}
	Let $|\cdot|$ be the operator norm on $\tp{Lin}(\mb V, \mb W)$. We have that, for $\xi_1, \xi_2 \in \Omega,$
	\begin{equation}
	\tp{rank\,}(A(\xi_1))>\tp{rank\,}(A(\xi_2)) \quad \implies \quad |A^\dagger(\xi_1)| \geq \frac{1}{|A(\xi_1)-A(\xi_2)|}.\label{eq:daggerbound}
	\end{equation}
	Indeed, if the hypothesis holds then there exists $v\in \ker A(\xi_2)\cap \ker(A(\xi_1))^\bot$ with $|v|=1$. Thus $A^\dagger(\xi_1)(A(\xi_1)-A(\xi_2))v=A^\dagger(\xi_1) A(\xi_1)v=v$ and so
	$$1\leq |A^\dagger(\xi_1)(A(\xi_1)-A(\xi_2))|\leq |A^\dagger(\xi_1)||A(\xi_1)-A(\xi_2)|.$$
	Now suppose that $\tp{rank}\, A$ is not constant, so we can pick a point $\xi_0\in \Omega$ and a sequence $\xi_n\to \xi_0$ such that $\tp{rank\,}(A(\xi_n))\neq \tp{rank\,}(A(\xi))$. It follows from (\ref{eq:daggerbound}) that $A^\dagger$ is not bounded near $\xi_0$.

Conversely, assuming that $\tp{rank}\,A$ is constant, $A^\dagger$ is smooth. Indeed, and as in \cite{Raita2018}, this is easily deduced from \textsc{Decell}'s formula \cite{DecellJr.1965}
$$A^\dagger = -\frac{1}{a_r} A^* \left(\sum_{i=1}^{r} a_{i-1} (A A^*)^{r-i}\right),$$
where $r=\tp{rank}\, A$, $d=\dim\mb W$ and $p(\lambda)=(-1)^d \sum_{j=0}^d a_j \lambda^{d-j}$ is the characteristic polynomial of $A$; note that $a_j=0$ for $j>r$ and $a_r\neq 0$ away from zero. Since the coefficients $a_i$ depend polynomially on $A$, it follows that $A^\dagger$ is smooth.
\end{proof}

In order to deduce the theorem from Lemma \ref{lemma:smoothness}, we need the following auxiliary result:

\begin{lemma}\label{lemma:symbolbound}
If \eqref{eq:CRestimate} holds for some $1\leq p\leq \infty$, there is a constant $C$ such that
\begin{equation}
\label{eq:symbolbound}
|\xi|^k |\mc A^*(\xi) w|\leq C |\mc A(\xi)\mc A^*(\xi) w|
\qquad  \tp{ for all } w\in \mb W, \xi\in \R^{n}\exc\{0\}.
\end{equation}
\end{lemma}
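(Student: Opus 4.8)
The plan is to use a scaling/Fourier-localization argument to pass from the $L^p$ inequality \eqref{eq:CRestimate} on test functions to the pointwise symbol bound \eqref{eq:symbolbound}. First I would test \eqref{eq:CRestimate} on functions of the form $\varphi(x) = \expo^{\imag \lambda x\cdot\xi}\eta(x) v$, where $\xi \in \R^n\exc\{0\}$ is fixed, $\eta\in C^\infty_c(\R^n)$ is a cutoff, $v\in\mb V$, and $\lambda\to\infty$. As $\lambda\to\infty$, the Fourier transform $\widehat\varphi$ concentrates near $\lambda\xi$, so the symbol $\mc A(\zeta)$ and the projection $\tp{Proj}_{\ker\mc A(\zeta)}$ appearing in the Fourier multipliers are, to leading order, $\mc A(\lambda\xi) = \lambda^k\mc A(\xi)$ and $\tp{Proj}_{\ker\mc A(\xi)}$, which are homogeneous in $\xi$. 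Dividing the resulting inequality by $\lambda^k$ (the natural scale of $\D^k$) should yield, in the limit, an inequality of the form $|\,(\tp{Id} - \tp{Proj}_{\ker\mc A(\xi)})v\,|\,|\xi|^k \leq C\,|\mc A(\xi)v|$ for all $v\in\mb V$, $\xi\neq 0$ — that is, control of $\D^k$ minus its ``kernel part'' by $\mc A$ at the symbol level. (One technical point: the $\D^k$ side must be handled as a vector of multipliers $\xi^\alpha$, $|\alpha|=k$, whose combined size is comparable to $|\xi|^k$.)

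\textbf{From the symbol estimate to \eqref{eq:symbolbound}.} Once the pointwise bound $|\xi|^k\,|(\tp{Id}-\tp{Proj}_{\ker\mc A(\xi)})v| \le C\,|\mc A(\xi)v|$ is established for all $v\in\mb V$, the claimed inequality \eqref{eq:symbolbound} follows by a short linear-algebra computation: specialize to $v = \mc A^*(\xi)w$. Then $\mc A(\xi)v = \mc A(\xi)\mc A^*(\xi)w$, matching the right-hand side of \eqref{eq:symbolbound}, and on the left one notes that $\mc A^*(\xi)w \in \tp{im}\,\mc A^*(\xi) = (\ker\mc A(\xi))^\bot$, so $(\tp{Id}-\tp{Proj}_{\ker\mc A(\xi)})\mc A^*(\xi)w = \mc A^*(\xi)w$. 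This gives exactly $|\xi|^k\,|\mc A^*(\xi)w| \le C\,|\mc A(\xi)\mc A^*(\xi)w|$, which is \eqref{eq:symbolbound}.

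\textbf{The main obstacle.} The delicate part is making the limit $\lambda\to\infty$ rigorous, i.e.\ controlling the error between the true Fourier multipliers $\D^k(\tp{Id}-P_{\mc A})$ — with symbols depending on $\zeta$ near $\lambda\xi$ — and their frozen-coefficient leading-order approximations. The projection $\tp{Proj}_{\ker\mc A(\zeta)}$ is $0$-homogeneous but need \emph{not} be continuous at points where the rank jumps; however, to run this direction of the argument we only need the estimate \emph{as an inequality} in the limit, and for a \emph{fixed} $\xi$ we may choose directions and cutoffs so that $\widehat\varphi$ is supported in a small cone around $\xi$ where, after rescaling by $\lambda$, the relevant symbols converge boundedly. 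A clean way to avoid multiplier subtleties altogether is to instead apply \eqref{eq:CRestimate} with $\varphi$ replaced by a Schwartz function whose Fourier transform is a smooth bump times $v$ supported in a ball $B(\lambda\xi, R)$ of fixed radius $R$, estimate both sides of \eqref{eq:CRestimate} directly via Plancherel-type or Young-type bounds on the multiplier pieces, and let $\lambda\to\infty$; the homogeneity $\mc A(\lambda\xi)=\lambda^k\mc A(\xi)$ does the rest. I expect the write-up to proceed by: (i) fixing $\xi$, $v$, and a bump $\widehat\varphi_\lambda$ concentrated at $\lambda\xi$; (ii) computing the leading asymptotics of both sides of \eqref{eq:CRestimate}; (iii) extracting $|\xi|^k|(\tp{Id}-\tp{Proj}_{\ker\mc A(\xi)})v|\le C|\mc A(\xi)v|$; and (iv) substituting $v=\mc A^*(\xi)w$ as above.
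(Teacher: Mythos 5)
Your overall strategy (oscillating test functions concentrated at frequency $\lambda\xi$, pass to the limit, read off a symbol inequality) is the right one, and you correctly identify the crux: the multiplier $\tp{Proj}_{\ker\mc A(\zeta)}$ defining $P_\mc A$ is $0$-homogeneous but discontinuous exactly at the rank-jump directions. However, your proposed resolution of this difficulty does not work, and this is a genuine gap. Choosing $\widehat\varphi$ supported in a ball $B(\lambda\xi,R)$ or a thin cone around $\xi$ does not make "the relevant symbols converge boundedly'': if the rank of $\mc A$ jumps at $\xi/|\xi|$, then $\tp{Proj}_{\ker\mc A(\zeta)}$ simply has no limit as $\zeta/|\zeta|\to\xi/|\xi|$ (it oscillates between projections of different ranks), no matter how small the cone. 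Boundedness of the projections (norm $\leq 1$) is not enough, because you need a \emph{lower} bound on $\Vert\D^k(\varphi-P_\mc A\varphi)\Vert_{L^p}$, and without convergence of the projection you cannot compute the leading asymptotics of the left-hand side of \eqref{eq:CRestimate} for a general amplitude $v\in\mb V$. (For $p\neq 2$ there is the additional problem that a merely bounded, non-smooth multiplier gives no $L^p$ control at all, which is precisely what the constant rank hypothesis would be needed for.) Consequently step (iii) of your plan, the intermediate estimate $|\xi|^k|(\tp{Id}-\tp{Proj}_{\ker\mc A(\xi)})v|\leq C|\mc A(\xi)v|$ for all $v$, cannot be extracted this way.

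The fix — and this is the key trick in the paper's proof — is to perform your step (iv) \emph{before} taking the limit, and at the level of the differential operator rather than of the frozen symbol: one tests \eqref{eq:CRestimate} with $\varphi_\e \equiv \mc A^*\big(g(\e x)\expo^{\imag x\cdot\xi}w\big)$, i.e.\ the adjoint \emph{operator} applied to the modulated cutoff. Then $\widehat{\varphi_\e}(\zeta)=\mc A^*(\zeta)\hat\eta(\zeta)\in\tp{im}\,\mc A^*(\zeta)=(\ker\mc A(\zeta))^\bot$ at \emph{every} frequency $\zeta$, so $P_\mc A\varphi_\e\equiv 0$ identically and the problematic projection term disappears from \eqref{eq:CRestimate} altogether; no multiplier asymptotics are needed. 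What remains is an elementary Leibniz expansion: $\varphi_\e=g(\e x)\expo^{\imag x\cdot\xi}\mc A^*(\xi)w+\e F_\e$ with $F_\e$ uniformly bounded and supported in $B_{1/\e}$, and similarly for $\mc A\varphi_\e$ and $\D^k\varphi_\e$; comparing $L^p$ norms over $B_{1/(2\e)}$ and $B_{1/\e}$ and letting $\e\to0$ gives \eqref{eq:symbolbound} directly, for every $1\leq p\leq\infty$. Your substitution $v=\mc A^*(\xi)w$ is the shadow of this choice, but made too late: it must be built into the test function so as to kill $P_\mc A\varphi$, not applied after a symbol estimate that your limiting argument cannot deliver for general $v$.
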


An argument in a similar spirit, but concerning \eqref{eq:ellestimate}, is outlined in \cite{Boman1972}.

\begin{proof}
Fix $\xi\in \mb R^{n}\exc\{0\}$ and $w\in \mb W$ and let $g\in C^\infty_c(B_1(0))$ be such that $0\leq g\leq 1$ and $g=1$ in $B_{1/2}(0)$. Set $\varphi(x) \equiv \mc A^*(g(\e x) \expo^{\imag x\cdot \xi} w)$ for $\varepsilon\in(0,1)$, so that
\begin{align*}
\varphi_\e(x) & 
= g(\e x) \expo^{\imag x\cdot \xi} \mc A^*(\xi)w + \sum_{|\a|=k}\sum_{\beta<\alpha} \binom{\a}{\beta} \e^{k-|\beta|} (\imag\xi)^\beta \p^{\alpha-\beta} g(\e x)\expo^{\imag x\cdot\xi} A^*_\a w\\
& \equiv g(\e x) \expo^{\imag x\cdot\xi} \mc A^*(\xi)w+\e F_{\varepsilon}(x),
\end{align*}
where $F_\varepsilon\in C_c^\infty(\R^n,\mb V_\C)$ is supported inside $B_{1/\e}(0)$ and is bounded independently of $\e$ by $C_0(\mc A,g,\xi,w)$, say; here $\mb V_\C$ is the usual complexification of $\mb V$. On the other hand, 
$P_\mc A \varphi_\e=0$: indeed, $\ker \mc A(\xi)=\left(\tp{im}\,\mc A^*(\xi)\right)^\bot$ and so, writing $\eta(x) \equiv g(\e x)e^{i x\cdot \xi}w$,
$$\mc F(P_\mc A \varphi_\e) = \tp{Proj}_{\ker \mc A(\xi)} \mc A^*(\xi)\hat \eta(\xi)=0.$$
We can analogously obtain
\begin{align*}
\mc A \varphi_\e(x) & = g(\e x) \expo^{\imag x\cdot \xi} \mc A(\xi) \mc A^*(\xi) w
+ \varepsilon G_\varepsilon(x),
\end{align*}
where $G_\varepsilon\in C_c^\infty(\R^n, \mb W_\C)$ is supported inside $B_{1/\e}(0)$ and can be assumed to be bounded independently of $\e$ by $C_0$, so
\begin{equation}
\label{eq:boundA}
|\mc A\varphi_\e(x)|\leq |g(\e x)| |\mc A(\xi) \mc A^*(\xi)w| + \e\left|G_\varepsilon(x)\right|.
\end{equation}
A similar calculation yields
\begin{equation}
|\D^k \varphi(x)|\geq  |g(\e x)||\xi|^k|\mc A^*(\xi) w|-\e \left|H_\varepsilon(x)\right|
\label{eq:boundD}
\end{equation}
for another smooth function $H_\e$ having the same properties as $G_\e$. 
 Clearly we can assume that $\mc A^*(\xi) w\neq 0$ for otherwise there is nothing to prove. We take $\varepsilon$ small enough such that $|\xi|^k|\mc A^*(\xi)w|\geq C_0\e$, so  the right hand side of \eqref{eq:boundD} is non-negative inside $B_{1/(2\e)}(0)$. Thus, for $1\leq p<\infty$, combining \eqref{eq:boundA} and \eqref{eq:boundD} with \eqref{eq:CRestimate} we find
$$
\mathscr L^n(B_{1/(2\e)})\left(|\xi|^k |\mc A^*(\xi) w| 
- \e C_0\right)^p
\leq C\mathscr L^n(B_{1/\e})\left(
|\mc A(\xi) \mc A^*(\xi)w|+\e C_0 \right)^p.
$$
Dividing by $\mathscr L^n (B_{1/\e})$ and sending $\e\to 0$ we arrive at the conclusion. The case $p=\infty$ is similar, but easier.
\end{proof}

\begin{proof}[Proof of the theorem]
Note that, for any $\xi \in \R^n\exc \{0\}$, $\hat{\varphi}(\xi)-\tp{Proj}_{\ker \mc{A}(\xi)}\hat{\varphi}(\xi)=\mc A^\dagger(\xi)\widehat{\mc A \varphi}(\xi).$
Thus, by the definition of $P_\mc A$, we have that
\begin{equation*}\label{eq:fourierB}
\D^k(\varphi-P_{\mc A} \varphi)=\mc{F}^{-1}(\mc A^\dagger(\xi)\widehat{\mc A \varphi}(\xi)\otimes\xi^{\otimes k})
\end{equation*}
and the ``if" direction follows	from Lemma \ref{lemma:smoothness} and the H\"ormander--Mihlin multiplier theorem.
	
For the ``only if" direction, suppose that \eqref{eq:CRestimate} holds. Thus Lemma \ref{lemma:symbolbound} shows that \eqref{eq:symbolbound} must hold as well and this easily implies that $\mc A$ has constant rank. 
Indeed, \eqref{eq:symbolbound} shows that the spectrum of $\mc A(\xi)|_{\tp{im}\, \mc A^*(\xi)}$ is bounded away from zero uniformly in $\xi$; equivalently, 
$$\mb S^{n-1}\ni \xi \mapsto \left(\mc A(\xi)|_{\tp{im}\, \mc A^*(\xi)}\right)^{-1}
 \tp{ is  bounded.}
$$
The definition of $\mc A^\dagger$, together with Lemma \ref{lemma:smoothness}, show that $\mc A$ has constant rank.
\end{proof}
In fact, our observation can be improved when $p=2$:
\begin{cor*}
The operator $\mc A$ has constant rank if and only if there is a constant $C$ such that
\begin{equation}
\label{eq:closedrange}
\inf \left\{\Vert \D^k(\varphi -\psi)\Vert_{L^2(\R^n)}: \mc A \psi=0, \psi\in C^\infty_c(\R^n,\mb V)\right\} \leq C \Vert \mc A \varphi\Vert_{L^2(\R^n)}
\end{equation}
for all $\varphi \in C^\infty_c(\R^n, \mb V)$. In particular, $\mc A$ has constant rank if and only if the operator
$$\mc A\colon \mathscr{W}^{\mc A, 2}(\R^n)\equiv \tp{clos}_{\varphi\mapsto \Vert \mc A \varphi \Vert_{L^2}} C^\infty_c(\R^n,\mb V)\to L^2(\R^n,\mb W)$$
has closed range.
\end{cor*}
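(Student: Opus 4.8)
The plan is to derive the corollary from the $p=2$ case of the Theorem, exploiting that in the Hilbert setting $P_{\mc A}\varphi$ is an \emph{optimal} $\mc A$-free approximation to $\varphi$. Indeed, by Plancherel's theorem $\Vert\D^k(\varphi-w)\Vert_{L^2}^2=\int_{\R^n}\mu(\xi)\,|\widehat\varphi(\xi)-\widehat w(\xi)|^2\d\xi$ for a fixed positive scalar weight $\mu$ depending only on $\xi$ (and comparable to $|\xi|^{2k}$), so among all $w\in L^2(\R^n,\mb V)$ with $\mc A w=0$ --- equivalently, with $\widehat w(\xi)\in\ker\mc A(\xi)$ for almost every $\xi$ --- the left-hand side is minimised, pointwise in $\xi$ and hence globally, by $\widehat w(\xi)=\tp{Proj}_{\ker\mc A(\xi)}\widehat\varphi(\xi)=\widehat{P_{\mc A}\varphi}(\xi)$. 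As every $\psi\in C^\infty_c(\R^n,\mb V)$ with $\mc A\psi=0$ is an admissible competitor, this gives
\[
\Vert\D^k(\varphi-P_{\mc A}\varphi)\Vert_{L^2(\R^n)}\ \le\ \inf\bigl\{\Vert\D^k(\varphi-\psi)\Vert_{L^2(\R^n)}:\mc A\psi=0,\ \psi\in C^\infty_c(\R^n,\mb V)\bigr\},
\]
with no hypothesis on $\mc A$. Hence if \eqref{eq:closedrange} holds then $\Vert\D^k(\varphi-P_{\mc A}\varphi)\Vert_{L^2}\le C\Vert\mc A\varphi\Vert_{L^2}$, which is \eqref{eq:CRestimate} for $p=2$, and the Theorem forces $\mc A$ to have constant rank.

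For the converse I would show that, when $\mc A$ has constant rank, the infimum above is attained in the limit at $\Vert\D^k(\varphi-P_{\mc A}\varphi)\Vert_{L^2}$, which by the Theorem is $\le C\Vert\mc A\varphi\Vert_{L^2}$; equivalently, the $\mc A$-free field $w:=P_{\mc A}\varphi$, which lies in $L^2$ with $\D^k w\in L^2$, must be approximated by compactly supported smooth $\mc A$-free fields in the seminorm $\Vert\D^k\cdot\Vert_{L^2}$. For this I would invoke the potential operator of a constant-rank $\mc A$ \cite{Raita2018}: a homogeneous constant-coefficient operator $\mc B$ of some order $\ell$ with $\mc A\mc B=0$ and $\tp{im}\,\mc B(\xi)=\ker\mc A(\xi)$ for all $\xi\neq0$, whose pseudoinverse $\mc B^\dagger$ is, by Lemma \ref{lemma:smoothness}, smooth and $(-\ell)$-homogeneous away from $0$. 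Since $\widehat w(\xi)\in\ker\mc A(\xi)=\tp{im}\,\mc B(\xi)$, the choice $\widehat u(\xi):=\mc B^\dagger(\xi)\widehat w(\xi)$ defines --- modulo polynomials --- a field $u$ with $\mc B u=w$, and from $|\xi|^{k+\ell}|\mc B^\dagger(\xi)|\lesssim|\xi|^{k}$ together with $|\widehat w|\le|\widehat\varphi|$ one gets $|\widehat{\D^{k+\ell}u}|\lesssim|\xi|^{k}|\widehat\varphi|\in L^2$, so $u\in\dot{W}^{k+\ell,2}(\R^n)$. Choosing $\theta_j\in C^\infty_c$ with $\D^{k+\ell}\theta_j\to\D^{k+\ell}u$ in $L^2$ (density of test functions in the homogeneous Sobolev space) and putting $\psi_j:=\mc B\theta_j\in C^\infty_c$, we have $\mc A\psi_j=0$ and, using $|\mc B(\xi)|\lesssim|\xi|^{\ell}$,
\[
\Vert\D^k(\psi_j-w)\Vert_{L^2}=\Vert\D^k\mc B(\theta_j-u)\Vert_{L^2}\ \lesssim\ \Vert\D^{k+\ell}(\theta_j-u)\Vert_{L^2}\ \to\ 0\quad(j\to\infty),
\]
so the infimum is $\le\Vert\D^k(\varphi-P_{\mc A}\varphi)\Vert_{L^2}\le C\Vert\mc A\varphi\Vert_{L^2}$, i.e.\ \eqref{eq:closedrange}.

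Finally, the ``closed range'' assertion is a routine reformulation. Estimate \eqref{eq:closedrange}, combined with the trivial reverse bound $\Vert\mc A\varphi\Vert_{L^2}=\Vert\mc A(\varphi-\psi)\Vert_{L^2}\lesssim\Vert\D^k(\varphi-\psi)\Vert_{L^2}$ (valid for any $\psi$ with $\mc A\psi=0$, since $\mc A$ has order $k$), says exactly that on $C^\infty_c(\R^n,\mb V)$ the seminorm $\varphi\mapsto\Vert\mc A\varphi\Vert_{L^2}$ is equivalent to the quotient seminorm $\varphi\mapsto\inf\{\Vert\D^k(\varphi-\psi)\Vert_{L^2}:\mc A\psi=0,\ \psi\in C^\infty_c\}$; passing to completions, this amounts to the assertion that $\mc A\colon\mathscr{W}^{\mc A,2}(\R^n)\to L^2(\R^n,\mb W)$ has closed range, while the converse follows from the open mapping theorem and the universal bound of the first paragraph, yielding \eqref{eq:CRestimate} for $p=2$ and hence constant rank. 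The step I expect to require the most care is the approximation in the second paragraph --- realising $P_{\mc A}\varphi$ as $\mc B u$ for a bona fide homogeneous Sobolev field and running the density argument, including the bookkeeping of the polynomial ambiguity of $u$ when $\ell\ge n$; everything else reduces to the Theorem or to standard functional analysis.
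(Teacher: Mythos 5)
Your argument is correct, and for the ``only if'' half it coincides with the paper's: Plancherel plus the pointwise optimality of the orthogonal projection gives $\Vert \D^k(\varphi-P_{\mc A}\varphi)\Vert_{L^2}\leq \inf\{\dots\}$, so \eqref{eq:closedrange} implies \eqref{eq:CRestimate} with $p=2$ and the Theorem applies. Where you genuinely diverge is the ``if'' half. The paper disposes of both directions in one line by asserting that the infimum in \eqref{eq:closedrange} ``is attained with $\psi=P_{\mc A}\varphi$''; but $P_{\mc A}\varphi$ is in general neither compactly supported nor an admissible competitor in the stated infimum, so for the direction (constant rank) $\Rightarrow$ \eqref{eq:closedrange} one still has to show that the infimum over $\mc A$-free test functions does not exceed $\Vert\D^k(\varphi-P_{\mc A}\varphi)\Vert_{L^2}$. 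You supply exactly this missing density step, by writing $P_{\mc A}\varphi=\mc B u$ for the potential operator $\mc B$ of a constant-rank operator \cite{Raita2018} (whose symbol satisfies $\tp{im}\,\mc B(\xi)=\ker\mc A(\xi)$, with $\mc B^\dagger$ smooth and homogeneous by Lemma \ref{lemma:smoothness}), approximating $u$ in $\dot W^{k+\ell,2}$ by test functions $\theta_j$, and taking $\psi_j=\mc B\theta_j$. This is a legitimate and, in fact, necessary supplement: the only caveats are the ones you already flag (the polynomial ambiguity of $u$ when $\widehat u$ fails to be locally integrable at the origin, which is harmless since only $\D^{k+\ell}u$ and $\mc B u$ enter), and the fact that this direction now quotes the existence of potentials from \cite{Raita2018}, a heavier tool than anything the paper's own one-line proof invokes, but one that is available precisely under the constant rank hypothesis being assumed there. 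Your treatment of the closed-range reformulation (quotient seminorm equivalence plus the open mapping theorem) matches the paper's appeal to \cite[§2.7, Remark 18]{Brezis2010} in substance and level of detail.
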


\begin{proof}
Note that the infimum in \eqref{eq:closedrange} is attained with $\psi=P_\mc A \varphi$, by Plancherel's theorem and the minimization properties of orthogonal projections. Hence the first part follows from the theorem, while the second statement is an immediate consequence of general results on unbounded linear operators, see for instance
\cite[§2.7, Remark 18]{Brezis2010}.
\end{proof}
Altogether, the observations made in the present note suggest that the general study of compensated compactness under linear partial differential constraints that are \emph{not of constant rank} requires substantially finer harmonic analysis tools, if any. Specifically, we refer to proving the results in \cite{Fonseca1999,Guerra2019,Murat1981} without any assumptions on the compensating differential operators.
In the particular case of quadratic forms \cite{Li1997, Tartar1979} or of simple operators \cite{Muller1999b} these assumptions can be bypassed but at present there is no general theory.

{\footnotesize
\bibliographystyle{acm}

\bibliography{/Users/antonialopes/Dropbox/Oxford/Bibtex/library.bib}
}

\end{document}